\documentclass[]{interact}

\usepackage{enumerate}

\usepackage[numbers,sort&compress]{natbib}
\bibpunct[, ]{[}{]}{,}{n}{,}{,}
\makeatletter
\def\NAT@def@citea{\def\@citea{\NAT@separator}}
\makeatother

\theoremstyle{plain}
\newtheorem{theorem}{Theorem}[section]

\theoremstyle{definition}
\newtheorem{definition}[theorem]{Definition}
\newtheorem{example}[theorem]{Example}

\theoremstyle{remark}
\newtheorem*{remark}{Remark}
\newtheorem*{remarks}{Remarks}

\newcommand{\CC}{{\mathbb C}}

\newcommand{\DD}{{\mathbb D}}
\DeclareMathOperator{\inter}{int}
\DeclareMathOperator{\diag}{diag}
\renewcommand{\Re}{\operatorname{Re}}

\begin{document}

\title{Spectral sets, extremal functions and exceptional matrices}

\author{
\name{
Thomas Ransford*
\thanks{*Communicating author, email: ransford@mat.ulaval.ca} 
and Nathan Walsh}
\affil{D\'epartement de math\'ematiques et statistique, Universit\'e Laval, Qu\'ebec (QC) G1V 0A6, Canada}
}

\maketitle

\begin{abstract}
Let $A$ be a square matrix and let $\Omega$ be an open set in the plane containing the spectrum of $A$.
We consider the problem of maximizing the operator norm  $\|f(A)\|$ amongst all holomorphic functions
$f$ from $\Omega$ into the closed unit disk. If $f_0$ is extremal for this problem and if $\|f_0(A)\|>1$, 
then it turns out that the matrix $f_0(A)$ has special properties, 
among them the fact that its principal left and right singular vectors 
are mutually orthogonal. We study this class of exceptional matrices $f_0(A)$.
In particular, we are interested in the extent 
to which they are characterized by the aforementioned orthogonality property.
\end{abstract}

\begin{keywords}
Spectral set; von Neumann inequality; extremal function.
\end{keywords}

\begin{amscode}
15A60
\end{amscode}

\section{Introduction}

\subsection{Spectral sets}

Let $A$ be a  complex $N\times N$  matrix.
We write $\|A\|$ for the operator norm of~$A$, considered as an operator on $\CC^N$ with the Euclidean norm.
Also we denote by $\sigma(A)$  the spectrum of $A$, namely its set of eigenvalues.

A closed  subset $X$ of $\CC$ is said to be a \emph{$K$-spectral set} for $A$ if $\sigma(A)\subset X$ and if,
for every rational function $f$ with poles outside $X$, we have
\begin{equation}\label{E:Kspectral}
\|f(A)\|\le K\sup_X|f|.
\end{equation}
By considering $f\equiv 1$, we see that necessarily $K\ge1$.
If \eqref{E:Kspectral} holds  with $K=1$, then $X$ is called simply a \emph{spectral set}.

The notion of spectral set was introduced by von Neumann \cite{vN51}.
He showed that, if $\|A\|\le1$, then the closed unit disk $\overline{\DD}$ is a spectral set for $A$. 
Using standard approximation arguments, one can reformulate this result in various equivalent ways.
For example, if $\|A\|\le1$ and $\sigma(A)\subset\DD$, then, for every bounded holomorphic
$f$ on $\DD$, we have
\begin{equation}\label{E:vN}
\|f(A)\|\le \sup_{\DD}|f|.
\end{equation}
Here $f(A)$ is defined using the usual holomorphic functional calculus.
There are now many proofs of von Neumann's result; two of our favourites are in \cite{Ne61,DD99}.

Another example of a spectral set is furnished by the spectral theorem, 
namely the fact that a normal matrix is unitarily equivalent to a diagonal matrix. 
From this it follows easily that, if $A$ is a normal matrix, 
then $\sigma(A)$ is itself a spectral set for $A$.

A third and more recent example arises in connection with the notion of numerical range. 
Recall that the \emph{numerical range} $W(A)$ of $A$ 
is the set of values of $\langle Ax,x\rangle$ 
as $x$ runs through the  vectors of $\CC^N$ of unit Euclidean norm. 
It is well known that $W(A)$ is a compact convex set containing $\sigma(A)$. 
Crouzeix and Palencia \cite{CP17} showed that $W(A)$  is $(1+\sqrt{2})$-spectral set for $A$, 
improving earlier results of Delyon and Delyon \cite{DD99} and Crouzeix \cite{Cr07}.
Crouzeix \cite{Cr04} proved that $W(A)$ is  even a $2$-spectral set for $A$ if $N=2$,
and conjectured that this is in fact true for all $N\ge3$.
This remains an interesting open problem.

For more information about spectral sets, 
we refer to the survey article of Badea and Beckermann \cite[Chapter 37]{BB14}.

\subsection{Extremal functions}\label{S:extremal}

Let $A$ be an $N\times N$ matrix, and let
$\Omega$ be an open subset of $\CC$ containing $\sigma(A)$.
A standard normal-families argument shows that there exists a
holomorphic function $f_0:\Omega\to\overline{\DD}$ such that $\|f(A)\|\le\|f_0(A)\|$
for all other holomorphic functions $f:\Omega\to\overline{\DD}$.
We call such a function $f_0$ \emph{extremal} for the pair $(A,\Omega)$.

It follows from this observation that, if $\sigma(A)$ is contained in the interior of
a closed subset $X$ of $\CC$ and if $f_0$ is extremal for 
the pair $(A,\inter(X))$,
then $X$ is automatically a $K$-spectral set for $A$, 
where $K:=\|f_0(A)\|$. There is thus a certain interest in identifying $f_0$ 
and estimating $\|f_0(A)\|$.

It was shown in \cite{Cr04} and again in \cite{CGH14} that, 
if $\Omega$ is simply connected and $\phi$ is a conformal map of $\Omega$ onto~$\DD$,
then every extremal function $f_0$ for $(A,\Omega)$ has the form $f_0=b\circ\phi$, 
where  $b$ is a Blaschke product of degree at most $N-1$. 
In some cases the Blaschke product $b$ is unique up to multiplication by unimodular constants, 
in other cases not.

For further information on extremal functions we refer to \cite{BGGRSW20}.

\subsection{Exceptional matrices}

In this article, the focus of our attention is not so much on the extremal functions $f_0$ themselves, 
but rather on the matrices $f_0(A)$. Can we characterize these matrices?

Once again, let $A$ be an $N\times N$ matrix, 
let $\Omega$ be a connected open set containing $\sigma(A)$, 
let $f_0$ be an extremal function for the pair $(A,\Omega)$, and set $E:=f_0(A)$. 
If $|f_0|$ attains the value $1$ at some point of $\Omega$, 
then by the maximum principle $f_0$ is constant, 
and $E$ is just a multiple of the identity matrix. 
Otherwise we have $f_0(\Omega)\subset\DD$, 
and in that case $E$ has the following properties:
\begin{enumerate}
\item[(i)] $\sigma(E)\subset\DD$;
\item[(ii)] $\|h(E)\|\le\|E\|$ for all holomorphic functions $h:\DD\to\overline{\DD}$.
\end{enumerate}
Indeed, property~(i) is an immediate consequence of the spectral mapping theorem, 
and property~(ii) follows directly from the definition of extremal function. 

Conversely, every $N\times N$ matrix $E$ satisfying (i) and (ii) is of the form $f_0(A)$ 
for some pair $(A,\Omega)$ and
some extremal function $f_0$ for $(A,\Omega)$. 
Indeed, we can just take $A:=E$ and $\Omega:=\DD$ and $f_0(z):=z$.

This would appear to answer the question posed at the beginning of the subsection.
However, it still leaves us with the problem of determining 
which matrices $E$ satisfy property~(ii). 
Notice that this property, applied  with $h\equiv1$, implies that $\|E\|\ge1$,
so the problem divides into two sub-cases, namely $\|E\|=1$ and $\|E\|>1$.

The first sub-case is easy. If $\sigma(E)\subset\DD$ and $\|E\|=1$, 
then (ii) automatically holds by von Neumann's inequality \eqref{E:vN}.

The second sub-case, namely $\|E\|>1$, is more interesting, 
especially in view of the ultimate goal of estimating $\|E\|=\|f_0(A)\|$. 
It turns out that matrices satisfying (i), (ii) and $\|E\|>1$ are somewhat special.
We are led to formulate the following definition.

\begin{definition}\label{D:exceptional}
An $N\times N$ matrix $E$ is \emph{exceptional} if it has the following properties:
\begin{enumerate}
\item[(i)] $\sigma(E)\subset\DD$,
\item[(ii)] $\|h(E)\|\le\|E\|$ for all holomorphic functions $h:\DD\to\overline{\DD}$,
\item[(iii)] $\|E\|>1$.
\end{enumerate}
\end{definition}

The purpose of this article is to study exceptional matrices:
to exhibit examples of such matrices, to establish their properties, and to try to characterize them.
We shall end up with a complete characterization when $N=2$ and a partial characterization when $N\ge3$.

\section{Examples of exceptional matrices}

When $N=1$, the two conditions (i) and (iii) in Definition~\ref{D:exceptional} are mutually exclusive.
Thus there are no $1\times 1$ exceptional matrices. Henceforth we assume that $N\ge2$.

Our first result provides some basic examples of exceptional matrices.

\begin{theorem}\label{T:NxNsuff}
Let $N\ge2$ and let $E$ be a matrix of the form
\begin{equation}\label{E:NxNsuff}
E:=
\left(
\begin{array}{c|c}
0 & \raisebox{-15pt}{{\Large\mbox{{$A$}}}} \\[-3ex]
\vdots & \\[-0.ex]
0 &\\ \hline
a& 0 \dots 0 
\end{array}
\right),
\end{equation}
where $a >1$ and $A$ is an $(N-1)\times(N-1)$ matrix with $\|A\|<1/a$.
Then $E$ is exceptional.
\end{theorem}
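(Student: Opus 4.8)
The plan is to check the three conditions of Definition~\ref{D:exceptional} using one elementary family of diagonal similarities. To begin, I would record the norm of $E$: writing a vector $x\in\CC^N$ as $x=(x_1,y)$ with $y\in\CC^{N-1}$, the form \eqref{E:NxNsuff} gives $Ex=(Ay,\,ax_1)$ (first $N-1$ coordinates $Ay$, last coordinate $ax_1$), hence $\|Ex\|^2=\|Ay\|^2+a^2|x_1|^2$ and therefore $\|E\|=\max(\|A\|,a)$. Since $\|A\|<1/a<1<a$, this yields $\|E\|=a$, which is condition~(iii).

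The key observation is that conjugating by $R_r:=\diag(r,1,\dots,1)$, for $r>0$, preserves the shape \eqref{E:NxNsuff}: a one-line computation shows that $R_r^{-1}ER_r$ is again a matrix of that form, with $a$ replaced by $ar$ and $A$ replaced by the matrix $A_r$ obtained from $A$ by multiplying its first row by $1/r$. Combining this with the norm formula above, for $0<r\le1$ we obtain $\|R_r^{-1}ER_r\|=\max(\|A_r\|,ar)\le\max(\|A\|/r,\,ar)$. For condition~(i), I would choose any $r$ with $\|A\|<r<1/a$; such $r$ exists because $\|A\|<1/a$, and then necessarily $r<1$. Both $\|A\|/r<1$ and $ar<1$, so $\|R_r^{-1}ER_r\|<1$. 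Since the spectral radius is invariant under similarity and bounded above by the operator norm, it follows that $\sigma(E)=\sigma(R_r^{-1}ER_r)\subset\DD$, which is condition~(i).

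For condition~(ii), I would instead take $r=1/a$ and set $R:=R_{1/a}$, so that $\|R\|=1$, $\|R^{-1}\|=a$, and $\|R^{-1}ER\|=\max(\|A_{1/a}\|,1)=1$ because $\|A_{1/a}\|\le a\|A\|<1$. Given any holomorphic $h:\DD\to\overline{\DD}$, condition~(i) shows that $\sigma(R^{-1}ER)=\sigma(E)\subset\DD$, so von Neumann's inequality~\eqref{E:vN} applied to the contraction $R^{-1}ER$ gives $\|h(R^{-1}ER)\|\le\sup_{\DD}|h|\le1$; since the holomorphic functional calculus commutes with similarity, $h(E)=R\,h(R^{-1}ER)\,R^{-1}$, whence $\|h(E)\|\le\|R\|\,\|R^{-1}\|=a=\|E\|$. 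This is condition~(ii), completing the proof.

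I do not expect a genuine obstacle here — everything hinges on spotting the similarity $R_r$. The two points that merit care are: the single matrix $R=\diag(1/a,1,\dots,1)$ does double duty, simultaneously turning $E$ into a contraction and satisfying $\|R\|\,\|R^{-1}\|=\|E\|$ exactly, which is precisely what delivers the sharp constant $\|E\|$ in~(ii) rather than something larger; and one must use an $r$ strictly between $\|A\|$ and $1/a$ to obtain the \emph{open}-disk inclusion in~(i), since at $r=1/a$ the similarity only yields spectral radius at most~$1$.
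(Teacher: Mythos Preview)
Your proof is correct and follows essentially the same approach as the paper: conjugate $E$ by a diagonal matrix to obtain a contraction, then invoke von Neumann's inequality and the bound $\|S\|\,\|S^{-1}\|=a$ for the similarity $S$. The only cosmetic differences are that the paper scales the last coordinate (via $V=\diag(1,\dots,1,a)$) rather than the first, and handles condition~(i) by observing directly that $\|J^2x\|<\|x\|$ for the single contraction $J=V^{-1}EV$, whereas you use a second value of the parameter $r$ to get a strict contraction.
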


\begin{proof}
First of all, we note that $\|E\|=\max\{a,\|A\|\}=a$. In particular, $\|E\|>1$.

Next, we remark that $E$ may be factorized as $E=VJV^{-1}$, where $V=\diag(1,1,\dots,1,a)$,
where
\[
J=\left(
\begin{array}{c|c}
0 & \raisebox{-15pt}{{\Large\mbox{{$AD$}}}} \\[-3ex]
\vdots & \\[-0.ex]
0 &\\ \hline
1& 0 \dots 0 
\end{array}
\right),
\]
and where $D$ is the $(N-1)\times (N-1)$ diagonal matrix $D=\diag(1,1,\dots,a)$. 
Notice that $\|AD\|\le\|A\|\|D\|=\|A\|a<1$.
It follows that $\|J\|=\max\{1,\|AD\|\}=1$.  Moreover, 
writing $\{e_1,\dots,e_N\}$ for the standard unit vector basis of $\CC^N$, we see that
$\|Jx\|<\|x\|$ for all vectors $x$ that are not scalar multiples of $e_1$,
and $Je_1=e_N$, so $\|J^2x\|<\|x\|$ for all non-zero vectors $x\in\CC^N$. 
In particular, it follows that $\sigma(J)\subset\DD$, and, since $E$ is similar to $J$, 
we therefore also have $\sigma(E)\subset\DD$.

Finally, given a holomorphic function $h:\DD\to\overline{\DD}$, 
we have $\|h(J)\|\le 1$ by von Neumann's inequality \eqref{E:vN}, and hence
\[
\|h(E)\|=\|h(VJV^{-1})\|=\|Vh(J)V^{-1}\|\le\|V\|\|h(J)\|\|V^{-1}\|\le a.1.1=\|E\|.
\]
This completes the verification that $E$ is exceptional.
\end{proof}

Here are two examples
concerning the sharpness of the condition $\|A\|<1/a$ in Theorem~\ref{T:NxNsuff}.

\begin{example}\label{Ex:ex1}
 Let $E$ be a matrix of the form \eqref{E:NxNsuff}, 
where $a>1$ and $A$ consists entirely of zeros except for an entry $\alpha$ in the top right-hand corner.
Then $E^2$ has $a\alpha$ as an eigenvalue, so, if $E$ is to be exceptional, then the condition that $\sigma(E)\subset\DD$
forces $|a\alpha|<1$, in other words $\|A\|<1/a$. This example shows that the constant $1/a$ in the condition $\|A\|<1/a$ is sharp.
\end{example}

\begin{example}\label{Ex:ex2}
 Again let $E$ be a matrix of the form \eqref{E:NxNsuff}, where $a>1$ and now $A$ has just zeros in the last column.
In the proof of Theorem~\ref{T:NxNsuff} above, we then have $\|AD\|\le \|A\|$, and so, if we assume merely that $\|A\|<1$ (rather than $1/a$), then the rest of the proof goes through to show that $E$ is exceptional. 
This example demonstrates that the sufficient condition $\|A\|<1/a$ is not necessary for $E$ to be exceptional.
\end{example}

The next result lists a number of different ways of constructing new exceptional matrices from old ones.

\begin{theorem}\label{T:newfromold}
\begin{enumerate}
\item[\rm (a)] If $E$ is exceptional, then its complex conjugate $\overline{E}$, its transpose $E^t$ and its adjoint $E^*$ are all exceptional.
\item[\rm (b)]\label{T:unitary} If $E$ is exceptional and if $U$ is unitary, then $U^*EU$ is exceptional.
\item[\rm(c)] If $E$ is  exceptional and if $h_0:\DD\to\DD$ is a holomorphic map such that
$\|h_0(E)\|=\|E\|$, then $h_0(E)$ is exceptional. In particular, $\alpha E$ is exceptional  whenever $\alpha\in\CC$ with  $|\alpha|=1$.
\item[\rm (d)] If $E,F$ are exceptional (perhaps of different dimensions), then the block matrix
$E\oplus F$ is exceptional.
\item[\rm(e)] If $E$ is exceptional and $F$ is a square matrix satisfying $\sigma(F)\subset\DD$ and $\|F\|\le1$,
 then the block matrix
$E\oplus F$ is exceptional.
\end{enumerate}
\end{theorem}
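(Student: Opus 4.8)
The plan is to verify conditions (i)--(iii) of Definition~\ref{D:exceptional} for each construction, relying throughout on the fact that for any holomorphic $h:\DD\to\overline{\DD}$ and any square matrix $M$ with $\sigma(M)\subset\DD$, the matrix $h(M)$ is well-defined via the holomorphic functional calculus and $h(M)$ commutes with everything $M$ commutes with. For part (a), the key observations are that complex conjugation, transposition and adjunction each preserve the operator norm and the spectrum (up to complex conjugation of eigenvalues, which still lands in $\DD$ since $\DD$ is symmetric), and that $h(E^t)=h(E)^t$, $h(\overline E)=\overline{h^\sharp(E)}$ where $h^\sharp(z):=\overline{h(\bar z)}$ is again a holomorphic self-map of $\DD$, and similarly $h(E^*)=h^\sharp(E)^*$. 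Running condition (ii) through these identities and taking a supremum over admissible $h$ gives the claim; note $\|E^t\|=\|\overline E\|=\|E^*\|=\|E\|>1$ handles (iii).

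For (b), conjugation by a unitary $U$ is an isometry on matrices that preserves the spectrum, and $h(U^*EU)=U^*h(E)U$, so all three conditions transfer immediately. For (c), set $E':=h_0(E)$. Condition (iii) is the hypothesis $\|h_0(E)\|=\|E\|>1$. Condition (i) follows from the spectral mapping theorem, since $\sigma(E')=h_0(\sigma(E))\subset h_0(\DD)\subset\DD$. For condition (ii), given holomorphic $h:\DD\to\overline{\DD}$, the composition $h\circ h_0:\DD\to\overline{\DD}$ is again admissible, and $h(E')=h(h_0(E))=(h\circ h_0)(E)$ by the composition rule for the holomorphic functional calculus; hence $\|h(E')\|=\|(h\circ h_0)(E)\|\le\|E\|=\|E'\|$ using that $E$ is exceptional. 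The special case $\alpha E$ is $h_0(z)=\alpha z$.

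For the block-sum statements (d) and (e), write $G:=E\oplus F$. In all cases $\|G\|=\max\{\|E\|,\|F\|\}$, and since $\|E\|>1$ we get $\|G\|=\|E\|>1$, giving (iii); also $\sigma(G)=\sigma(E)\cup\sigma(F)\subset\DD$, giving (i). For (ii), the functional calculus respects direct sums, so $h(G)=h(E)\oplus h(F)$ for every holomorphic $h:\DD\to\overline{\DD}$, whence $\|h(G)\|=\max\{\|h(E)\|,\|h(F)\|\}$. In case (d) each term is bounded by $\|E\|$ and $\|F\|$ respectively, both at most $\|G\|$. In case (e), $\|h(E)\|\le\|E\|$ because $E$ is exceptional, while $\|h(F)\|\le\|F\|\le1<\|E\|$ by von Neumann's inequality~\eqref{E:vN} applied to $F$ (legitimate since $\sigma(F)\subset\DD$ and $\|F\|\le1$). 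Either way $\|h(G)\|\le\|E\|=\|G\|$, which is (ii).

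I do not anticipate a serious obstacle here: the whole theorem is a matter of chasing the functional calculus through four elementary matrix operations, and the only point requiring a little care is the conjugation identity $h(\overline E)=\overline{h^\sharp(E)}$ in part (a)—one must check that $h^\sharp$ is genuinely holomorphic on $\DD$ and maps into $\overline{\DD}$, which is immediate—and the observation in (c) that the class of admissible test functions $h$ is closed under post-composition, which is what makes the argument close up. The norm and spectrum bookkeeping for the block sums is routine.
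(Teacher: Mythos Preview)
Your plan is correct and matches the paper's approach exactly---the paper simply declares all five parts to be ``routine verifications'' and omits the details, so you have in fact supplied more than the authors did. Two cosmetic slips to clean up: in the block-sum paragraph, the claim $\|G\|=\|E\|$ holds only in case~(e), not in case~(d) (where $\|G\|=\max\{\|E\|,\|F\|\}$ may equal $\|F\|$), though this does not affect your argument; and in case~(e), von~Neumann's inequality gives $\|h(F)\|\le\sup_{\DD}|h|\le1$, not $\|h(F)\|\le\|F\|$ (consider $h\equiv1$, $F=0$), but the conclusion $\|h(F)\|\le1$ that you actually need is unaffected.
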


\begin{proof}
The proofs are all routine verifications. We omit the details.
\end{proof}

Finally we record a result that reduces the  amount of work required to
verify that a given matrix is exceptional, and opens 
the door to numerical testing in low dimensions.

\begin{theorem}
Let $E$ be an $N\times N$ matrix such that $\sigma(E)\subset\DD$ and $\|E\|>1$.
Then $E$ is exceptional if and only if $\|b(E)\|\le\|E\|$
for every Blaschke product $b$ of degree at most $N-1$.
\end{theorem}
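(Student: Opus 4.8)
One implication is immediate. If $E$ is exceptional, then property~(ii) of Definition~\ref{D:exceptional} applies in particular to every Blaschke product $b$ of degree at most $N-1$, since such a $b$ is a holomorphic map of $\DD$ into $\DD\subset\overline{\DD}$; hence $\|b(E)\|\le\|E\|$.

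For the converse I would lean on the description of extremal functions recalled in Section~\ref{S:extremal}. Suppose $\|b(E)\|\le\|E\|$ for every Blaschke product $b$ of degree at most $N-1$. Since $\sigma(E)\subset\DD$, the pair $(E,\DD)$ admits an extremal function $f_0$, that is, a holomorphic $f_0:\DD\to\overline{\DD}$ with $\|f(E)\|\le\|f_0(E)\|$ for every holomorphic $f:\DD\to\overline{\DD}$. Choosing $f(z)=z$ gives $\|f_0(E)\|\ge\|E\|>1$, so $f_0$ is non-constant, and therefore $f_0(\DD)\subset\DD$ by the maximum-principle dichotomy recalled earlier. Invoking the theorem of \cite{Cr04,CGH14} with $\Omega=\DD$ and $\phi$ the identity map, we conclude that $f_0$ is itself a Blaschke product of degree at most $N-1$. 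The hypothesis now applies to $b:=f_0$ and yields $\|f_0(E)\|\le\|E\|$; combined with the extremal property of $f_0$, this gives $\|f(E)\|\le\|f_0(E)\|\le\|E\|$ for every holomorphic $f:\DD\to\overline{\DD}$, which is exactly property~(ii). Together with the standing hypotheses $\sigma(E)\subset\DD$ and $\|E\|>1$, it follows that $E$ is exceptional.

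The argument is short, and the only step requiring a little care is checking that $f_0$ maps $\DD$ \emph{strictly} into itself, which is what legitimizes the word ``Blaschke product'' and the degree bound $N-1$; the hypothesis $\|E\|>1$ is precisely what excludes the degenerate alternative. A reader preferring a proof not relying on the extremal-function structure theorem could instead argue as follows. The matrix $f(E)$ depends only on the numbers $f^{(i)}(\lambda)$ for $\lambda\in\sigma(E)$ and $i$ less than the index of $\lambda$, so the set $\mathcal{D}\subset\CC^{D}$ of attainable data (where $D\le N$ is the degree of the minimal polynomial of $E$) is compact and convex, and the correspondence sending a data vector to $f(E)$ is linear; hence $\|f(E)\|$ is a convex function on $\mathcal{D}$ and attains its maximum at an extreme point of $\mathcal{D}$. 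An extreme point of $\mathcal{D}$ cannot have a strictly positive associated Pick matrix, because positive-definiteness is an open condition and would place the data in the interior of $\mathcal{D}$; the corresponding Nevanlinna--Pick/Carath\'eodory--Fej\'er problem is therefore determinate, its unique solution is a Blaschke product of degree equal to the rank of the Pick matrix, and that rank is at most $D-1\le N-1$. Either way the maximum of $\|f(E)\|$ over all holomorphic $f:\DD\to\overline{\DD}$ is attained at a Blaschke product of degree at most $N-1$, whence the hypothesis delivers the conclusion.
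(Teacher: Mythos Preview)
Your main argument is correct and is exactly what the paper has in mind: its proof merely says that the `if' part ``follows from the result about extremal functions cited in \S\ref{S:extremal},'' and your second paragraph spells out precisely how that citation unpacks (apply the Blaschke-product structure theorem to the extremal $f_0$ for the pair $(E,\DD)$, then use the hypothesis on $b:=f_0$). The convexity/Nevanlinna--Pick alternative in your last paragraph is a legitimate self-contained substitute for the cited structure theorem, but the paper does not go that route.
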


\begin{proof}
The `only if' is obvious. The `if' part follows from the result about extremal functions
cited in \S\ref{S:extremal}.
\end{proof}

\section{Properties of exceptional matrices}

As their name suggests, exceptional matrices enjoy a number of special properties.
We now investigate these. Perhaps the most significant one is the following
orthogonality theorem, which is a translation into our language of a result in \cite{CGH14}.

\begin{theorem}\label{T:orthog}
Let $E$ be an $N\times N$ exceptional matrix, and let $x_0$ be a unit vector in $\CC^N$ 
such that $\|Ex_0\|=\|E\|$. Then
\begin{equation}\label{E:orthog}
\langle Ex_0,x_0\rangle=0.
\end{equation}
\end{theorem}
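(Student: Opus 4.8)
The plan is to run a first-order perturbation argument: one perturbs the extremal function $z\mapsto z$ within the class of holomorphic self-maps of $\DD$, using a one-parameter family of disk automorphisms, and exploits the fact that property~(ii) of Definition~\ref{D:exceptional} forbids the resulting norm from exceeding $\|E\|$ in order to extract a constraint on $\langle Ex_0,x_0\rangle$. Throughout, write $c:=\langle Ex_0,x_0\rangle$ (this is what we must show is zero), let $y_0:=Ex_0/\|E\|$, which is a unit vector since $\|Ex_0\|=\|E\|$, and put $d:=\langle Ey_0,y_0\rangle$.

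First I would fix a unimodular constant $\omega$ and, for small $t>0$, set $h_{t\omega}(z):=(z-t\omega)/(1-t\bar\omega z)$, a holomorphic self-map of $\DD$ whose only pole lies outside $\overline\DD$. Then $h_{t\omega}(E)$ is well defined by property~(i) and property~(ii) gives $\|h_{t\omega}(E)\|\le\|E\|$. A Taylor expansion gives $h_{t\omega}(E)=E+t(\bar\omega E^2-\omega I)+O(t^2)$, so, writing $B_\omega:=\bar\omega E^2-\omega I$, we get $\|(E+tB_\omega)x_0\|\le\|E\|+O(t^2)$. Squaring, using $\|Ex_0\|^2=\|E\|^2$, and dividing by $t$, this forces $\Re\langle B_\omega x_0,Ex_0\rangle\le0$, that is, $\Re\langle B_\omega x_0,y_0\rangle\le0$; since replacing $\omega$ by $-\omega$ negates $B_\omega$, in fact $\Re\langle B_\omega x_0,y_0\rangle=0$ for every unimodular $\omega$. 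Because $Ex_0=\|E\|y_0$, one has $\langle E^2x_0,y_0\rangle=\|E\|\,d$ and $\langle x_0,y_0\rangle=\bar c/\|E\|$, so this vanishing becomes $\Re\big(\omega(\|E\|\,\bar d-\bar c/\|E\|)\big)=0$ for all unimodular $\omega$, which can hold only if $c=\|E\|^2d$.

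To close the loop I would feed this relation into the adjoint. By Theorem~\ref{T:newfromold}(a), $E^*$ is exceptional, and since $E^*Ex_0=\|E\|^2x_0$ we have $E^*y_0=\|E\|x_0$; thus $y_0$ is a principal right singular vector of $E^*$, with associated left singular vector $x_0$. Applying $c=\|E\|^2d$ to $E^*$ with this pair of vectors gives $\langle E^*y_0,y_0\rangle=\|E\|^2\langle E^*x_0,x_0\rangle$, i.e.\ $\bar d=\|E\|^2\bar c$, i.e.\ $d=\|E\|^2c$. Combined with $c=\|E\|^2d$ this yields $c=\|E\|^4c$, and since $\|E\|>1$ by property~(iii) we conclude $c=0$, which is \eqref{E:orthog}.

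The one thing that has to be found is the right perturbation — the automorphism family $h_{t\omega}$, whose derivative at $t=0$ produces $\bar\omega E^2-\omega I$, together with the idea of letting $\omega$ range over the circle — after which the work is routine. The only point to watch is that the $O(t^2)$ remainder in the expansion of $h_{t\omega}(E)$ stays negligible after division by $t$, which it does since $E$ is a fixed matrix and $\omega$ enters only through bounded scalar coefficients; and it is worth noting that no hypothesis on the multiplicity of the largest singular value of $E$ is needed, since the argument tests only the single vector $x_0$. Property~(iii) is used in an essential way exactly at the last line: without $\|E\|>1$, the identity $c=\|E\|^4c$ would be empty.
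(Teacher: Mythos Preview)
Your proof is correct and uses the same perturbation family (disk automorphisms $h_w$) and the same first-order expansion as the paper. The only difference is in how the first-order condition is unpacked: you extract the relation $c=\|E\|^2 d$ and then run the argument a second time on $E^*$ to obtain $d=\|E\|^2 c$, whereas the paper observes directly, via $E^*Ex_0=\|E\|^2x_0$, that $\langle E^2x_0,Ex_0\rangle=\langle Ex_0,E^*Ex_0\rangle=\|E\|^2\langle Ex_0,x_0\rangle=\|E\|^2 c$, which in your notation says $d=c$; this collapses your two passes into one and yields $(\|E\|^2-1)\Re(\bar w\,c)\le O(|w|^2)$ straightaway. Your detour through $E^*$ is valid but unnecessary.
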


\begin{remarks}
(1) This theorem can be formulated as saying that the principal left and right singular vectors of an exceptional matrix
are always orthogonal to each other.

(2) Apparently unaware of \cite{CGH14}, the authors of  \cite{CGL18} rediscovered this result, and used it to
give a very simple proof of the theorem of Okubo and Ando \cite{OA75} to the effect that, if $W(A)\subset\overline{\DD}$, then
$\overline{\DD}$ is a $2$-spectral set for $A$.
\end{remarks}

Since Theorem~\ref{T:orthog} plays a crucial role in what follows, 
we include a short proof for the convenience of the reader.

\begin{proof}[Proof of Theorem~\ref{T:orthog}]
For $w\in\DD$, let $h_w(z):=(z-w)/(1-\overline{w}z)$. 
Then $h_w$ maps $\DD$ into $\DD$ and, for each $z\in\DD$,
\[
h_w(z)=z-w+\overline{w}z^2+O(|w|^2)
\quad(w\to0).
\]
As $E$ is exceptional, we have $\|h_w(E)\|\le\|E\|$, and so, since $x_0$ is a unit vector,
\[
|\langle h_w(E)x_0,Ex_0\rangle|\le \|h_w(E)x_0\|\|Ex_0\|\le\|h_w(E)\|\|E\|\le\|E\|^2.
\]
Hence
\begin{align*}
\|E\|^2&\ge
\Re \langle h_w(E)x_0,Ex_0\rangle\\
&=\Re \langle (E-wI+\overline{w}E^2+O(|w|^2))x_0,Ex_0\rangle\\
&=\|Ex_0\|^2-\Re\bigl(w\langle x_0,Ex_0\rangle\bigr)+ \Re\bigl(\overline{w}\langle E^2x_0,Ex_0\rangle\bigr)+O(|w|^2)\\
&=\|Ex_0\|^2-\Re\bigl(\overline{w}\langle Ex_0, x_0\rangle\bigr)+ \Re\bigl(\overline{w}\langle Ex_0,E^*Ex_0\rangle\bigr)+O(|w|^2)\\
&=\|Ex_0\|^2+\Re\bigl(\overline{w}\langle Ex_0,(E^*E-I)x_0\rangle\bigr)+O(|w|^2).
\end{align*}
Since $x_0$ is a unit vector where $E$ attains its norm, we have $\|Ex_0\|=\|E\|$ and $E^*Ex_0=\|E\|^2x_0$.
Hence
\[
\|E\|^2\ge \|E\|^2+(\|E\|^2-1)\Re\bigl(\overline{w}\langle Ex_0,x_0\rangle\bigr)+O(|w|^2).
\]
Cancelling off the $\|E\|^2$ terms and dividing through by $(\|E\|^2-1)$,
which is strictly positive, we obtain
\[
\Re\bigl(\overline{w}\langle Ex_0,x_0\rangle\bigr)\le O(|w|^2).
\]
Letting $w\to0$ and noting that the argument of $w$ is arbitrary, we get \eqref{E:orthog}.
\end{proof}

The next theorem is also a translation into our language of a result in \cite{CGH14}.
Once again, we include a short proof for the convenience of the reader 
(a bit different from the one given in \cite{CGH14}).

\begin{theorem}\label{T:positive}
Let $E$ be an $N\times N$ exceptional matrix, 
and let $x_0$ be a unit vector in $\CC^N$ such that $\|Ex_0\|=\|E\|$. Then,
for every holomorphic function $h:\DD\to\DD$,
\begin{equation}\label{E:positive}
|\langle h(E)x_0,x_0\rangle|\le 1.
\end{equation}
\end{theorem}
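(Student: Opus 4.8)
The plan is to convert the quadratic form $\langle h(E)x_0,x_0\rangle$ into one of the shape $\langle g(E)x_0,Ex_0\rangle$, in which the vector $Ex_0$ already carries a full factor of $\|E\|$, and then to pit this against the exceptionality estimate applied to a suitable holomorphic test function $g$.

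First I would record the elementary fact that, since $x_0$ is a unit vector at which $E$ attains its norm, $x_0$ is a top singular vector of $E$, i.e.\ $E^*Ex_0=\|E\|^2x_0$. (If $T\ge0$ and $v$ is a unit vector with $\langle Tv,v\rangle=\|T\|$, then $\|Tv-\|T\|v\|^2=\|Tv\|^2-\|T\|^2\le0$, so $Tv=\|T\|v$; apply this with $T=E^*E$.) Hence $x_0=\|E\|^{-2}E^*Ex_0$, and substituting this into the second slot of the inner product gives
\[
\langle h(E)x_0,x_0\rangle
=\frac{1}{\|E\|^2}\,\langle h(E)x_0,\,E^*Ex_0\rangle
=\frac{1}{\|E\|^2}\,\langle E\,h(E)x_0,\,Ex_0\rangle .
\]
Because the holomorphic functional calculus is multiplicative, $E\,h(E)=g(E)$, where $g$ is the holomorphic function $g(z):=z\,h(z)$ on $\DD$.

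Now $g$ maps $\DD$ into $\DD$ (indeed $|g(z)|=|z|\,|h(z)|<1$ there), so $g$ is admissible in property~(ii) of Definition~\ref{D:exceptional}, giving $\|g(E)\|\le\|E\|$. Combined with the Cauchy--Schwarz inequality,
\[
\bigl|\langle g(E)x_0,Ex_0\rangle\bigr|\le\|g(E)x_0\|\,\|Ex_0\|\le\|g(E)\|\cdot\|E\|\le\|E\|^2 ,
\]
and feeding this back into the displayed identity yields $|\langle h(E)x_0,x_0\rangle|\le1$, which is \eqref{E:positive}.

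In this argument there is really only one idea to get right: that the norming vector $x_0$ may be rewritten as $\|E\|^{-2}E^*Ex_0$. This is exactly what lets one absorb the spare factor of $z$ and match the estimate $\|g(E)\|\le\|E\|$ against the full norm $\|Ex_0\|=\|E\|$. No perturbation argument is required, and Theorem~\ref{T:orthog} is not even used. The only points needing a word of care are that $g=zh$ genuinely maps into $\overline{\DD}$, so that exceptionality applies, and that $\|E\|\ne0$, so the division is legitimate --- both automatic here.
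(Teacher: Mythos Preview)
Your proof is correct and is genuinely simpler than the paper's. The paper proceeds by a perturbation argument: for $t>0$ it considers the auxiliary map $g_t(z):=z\exp\bigl(-t(1-h(z))\bigr)$, expands $\langle g_t(E)x_0,Ex_0\rangle$ to first order in $t$, uses exceptionality together with $E^*Ex_0=\|E\|^2x_0$ to extract $\Re\langle h(E)x_0,x_0\rangle\le1$, and then rotates $h$ by unimodular constants to upgrade this to the absolute-value bound. Your argument bypasses all of this: by writing $x_0=\|E\|^{-2}E^*Ex_0$ in the second slot you convert the target directly into $\|E\|^{-2}\langle g(E)x_0,Ex_0\rangle$ with $g(z)=zh(z)$, and a single application of exceptionality plus Cauchy--Schwarz finishes the job. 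No limiting procedure, no real-part step, no rotation.

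What each approach buys: the paper's perturbation method is the same device used in its proof of Theorem~\ref{T:orthog}, so there is a certain uniformity of technique, and it isolates the exact first-order obstruction. Your method is shorter, yields the full modulus inequality in one stroke, and in fact works verbatim for $h:\DD\to\overline{\DD}$ (since $|zh(z)|\le|z|<1$ on $\DD$), so you even get a slight strengthening of the stated hypothesis for free.
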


\begin{proof}
For $t>0$, let 
\[
g_t(z):=z\exp\Bigl(-t(1-h(z))\Bigr).
\]
Note that $1-h(z)$ has positive real part for all $z\in\DD$, so $g_t$ maps $\DD$ into $\DD$.
Also, for each $z\in\DD$,
\[
g_t=z-tz(1-h(z))+O(t^2) \quad(t\to0^+).
\]
As $E$ is exceptional, we have $\|g_t(E)\|\le\|E\|$, and so, since $x_0$ is a unit vector,
\[
|\langle g_t(E)x_0,Ex_0\rangle|\le \|g_t(E)x_0\|\|Ex_0\|\le\|g_t(E)\|\|E\|\le\|E\|^2.
\]
Hence
\begin{align*}
\|E\|^2&\ge
\Re \langle g_t(E)x_0,Ex_0\rangle\\
&=\Re \langle (E-tE(I-h(E))+O(t^2))x_0,Ex_0\rangle\\
&=\|Ex_0\|^2-t\Re\bigl\langle (E(I-h(E)) x_0,Ex_0\rangle\bigr)+O(t^2)\\
&=\|Ex_0\|^2-t\Re\bigl\langle (I-h(E)) x_0,E^*Ex_0\rangle\bigr)+O(t^2).
\end{align*}
Since $x_0$ is a unit vector on  which $E$ attains its norm, 
we have $\|Ex_0\|=\|E\|$ and $E^*Ex_0=\|E\|^2x_0$.
Hence
\[
\|E\|^2\ge \|E\|^2-t\|E\|^2\Re\langle (I-h(E))x_0,x_0\rangle+O(t^2).
\]
Cancelling off the $\|E\|^2$ terms and dividing through by $t$,
and letting $t\to0^+$, we get 
\[
\Re\langle h(E)x_0,x_0\rangle\le 1.
\]
Repeating the argument with $h$ replaced by $e^{i\theta}h$, and letting $\theta$ vary, we finally obtain
\[
|\langle h(E)x_0,x_0\rangle|\le 1.\qedhere
\]
\end{proof}

We conclude this section by remarking that the ideas behind Theorems~\ref{T:orthog} and \ref{T:positive} are developed further in \cite{BGGRSW20}. In particular, it is shown in \cite[Theorem~4.5]{BGGRSW20} that, under the hypotheses
of Theorem~\ref{T:positive}, there exists a unique probability measure $\mu$ on $\partial \DD$ such that,
for all continuous functions $h$ on $\overline{\DD}$ that are holomorphic on $\DD$, we have
\[
\langle h(E)x_0,x_0\rangle =\int_{\partial\DD}h\,d\mu.
\]

\section{Characterization of exceptional matrices}

In this section, we amalgamate the ideas from the two preceding sections to try to characterize
exceptional matrices.
The following result furnishes a necessary condition and a sufficient condition that resemble each other.
The necessary condition is inspired by a result in \cite{CGH14}.

\begin{theorem}\label{T:NxNnec}
Let $N\ge 2$ and let $E$ be a $N\times N$  matrix. 
\begin{enumerate}
\item[\rm(a)]
If $E$ is exceptional, then
$E$ is unitarily equivalent to a matrix of the form
\begin{equation}\label{E:NxNnec}
\left(
\begin{array}{c|c}
0 & \raisebox{-15pt}{{\Large\mbox{{$A$}}}} \\[-3ex]
\vdots & \\[-0.ex]
0 &\\ \hline
a& 0 \dots 0 
\end{array}
\right),
\end{equation}
where $a=\|E\|$ and $A$ is an $(N-1)\times(N-1)$ matrix with $\|A\|\le a$.
\item[\rm(b)]
If $E$ is unitarily equivalent to a matrix of the form \eqref{E:NxNnec},
where $a>1$ and  $A$ is an $(N-1)\times(N-1)$ matrix with $\|A\|<1/a$,
then $E$ is exceptional and $\|E\|=a$.
\end{enumerate}
\end{theorem}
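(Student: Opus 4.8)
The plan is to handle the two parts separately, since part (b) is essentially a corollary of earlier results. A matrix of the form \eqref{E:NxNnec} with $a>1$ and $\|A\|<1/a$ is exceptional with operator norm equal to $a$ by Theorem~\ref{T:NxNsuff}, and both exceptionality and the operator norm are invariant under unitary equivalence, the former by Theorem~\ref{T:newfromold}(b). Hence if $E$ is unitarily equivalent to such a matrix, then $E$ itself is exceptional and $\|E\|=a$. Nothing further is needed for (b).

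For part (a), the workhorse is the orthogonality theorem, Theorem~\ref{T:orthog}. I would set $a:=\|E\|$ and choose a unit vector $x_0$ with $\|Ex_0\|=a$. Since $x_0$ is a leading right singular vector of $E$, it satisfies $E^*Ex_0=a^2x_0$; in particular $Ex_0$ has norm $a$, so $Ex_0/a$ is a unit vector. Put $u_1:=x_0$ and $u_N:=Ex_0/a$. By Theorem~\ref{T:orthog} we have $\langle Ex_0,x_0\rangle=0$, i.e.\ $u_N\perp u_1$, so $\{u_1,u_N\}$ is an orthonormal pair; extend it to an orthonormal basis $\{u_1,u_2,\dots,u_{N-1},u_N\}$ of $\CC^N$.

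The claim is that the matrix of $E$ relative to this basis has the form \eqref{E:NxNnec}. Its first column is $(\langle Eu_1,u_i\rangle)_{i=1}^{N}$, and since $Eu_1=au_N$ this column is $(0,\dots,0,a)^t$. Its last row is $(\langle Eu_j,u_N\rangle)_{j=1}^{N}$; using $E^*u_N=E^*Ex_0/a=a^2x_0/a=au_1$, one gets $\langle Eu_j,u_N\rangle=\langle u_j,E^*u_N\rangle=a\langle u_j,u_1\rangle=a\delta_{j1}$, so the last row is $(a,0,\dots,0)$. Thus the matrix has the block form \eqref{E:NxNnec} for some $(N-1)\times(N-1)$ block $A$. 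Finally, $A$ is obtained from the matrix of $E$, whose operator norm is $\|E\|=a$, by deleting one row and one column, and a submatrix formed by selecting a subset of rows and columns has norm at most that of the whole matrix; hence $\|A\|\le a$.

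I do not expect a genuine obstacle here: once Theorem~\ref{T:orthog} is available, part (a) is a bookkeeping argument whose only non-trivial input is the orthogonality of the leading left and right singular vectors of $E$. The one point to treat with a little care is the collapse of the last row to $(a,0,\dots,0)$, which is where the singular-vector identity $E^*Ex_0=a^2x_0$ (as opposed to a second appeal to Theorem~\ref{T:orthog}) enters.
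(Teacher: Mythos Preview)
Your proof is correct and follows essentially the same route as the paper: for (a) you use Theorem~\ref{T:orthog} to get $u_1\perp u_N$, extend to an orthonormal basis, and then use $E^*Ex_0=a^2x_0$ to kill the last row, exactly as the paper does; for (b) you combine Theorems~\ref{T:NxNsuff} and~\ref{T:newfromold}(b), which is also the paper's argument. The only cosmetic addition is that you spell out the submatrix bound $\|A\|\le a$, which the paper leaves implicit.
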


\begin{proof}
(a) Let $y_1$ be a unit vector in $\CC^N$ on which $E$ attains its norm, and set 
$y_N:=Ey_1/\|E\|$. Clearly $y_N$ is a unit vector, and, by Theorem~\ref{T:orthog},
the vectors $y_1$ and $y_N$ are orthogonal. We may therefore extend them to an orthonormal 
basis $\{y_1,y_2,\dots,y_N\}$  of $\CC^N$.

Now, for $1\le j<N$, we have 
\begin{equation}\label{E:orthog1}
\langle Ey_1,y_j\rangle=\|E\|\langle y_N,y_j\rangle=0.
\end{equation}
Also, since $E$ attains its norm on $y_1$, we have  $E^*Ey_1=\|E\|^2y_1$,
and consequently, for $1<j\le N$, we have
\begin{equation}\label{E:orthog2}
\langle y_N,Ey_j\rangle=\langle Ey_1,Ey_j\rangle/\|E\|=\langle E^*Ey_1,y_j\rangle/\|E\|=\|E\|\langle y_1,y_j\rangle=0.
\end{equation}
The orthogonality relations \eqref{E:orthog1} and \eqref{E:orthog2} imply that 
$E$ has a matrix of the form \eqref{E:NxNnec} with respect to the orthonormal basis $\{y_1,\dots,y_N\}$,
in other words, that $E$ is unitarily equivalent to a matrix of that  form.

(b) This is proved simply by combining Theorem~\ref{T:NxNsuff} and Theorem~\ref{T:newfromold}\,(b).
\end{proof}

Theorem~\ref{T:NxNnec} is only a partial characterization of exceptional matrices,
because of the gap between the necessary condition $\|A\|\le a$ and the sufficient condition $\|A\|\le 1/a$.
Examples~\ref{Ex:ex1} and \ref{Ex:ex2}  hint at some  obstacles to closing this gap. 
However, there is one case where we can close the gap completely, namely when $N=2$.
The following result exhibits several characterizations of $2\times 2$ exceptional matrices.

\begin{theorem}\label{T:2x2}
Let $E$ be a  $2\times 2$ matrix such that $\sigma(E)\subset\DD$ and $\|E\|>1$. 
The following statements are equivalent:
\begin{enumerate}
\item[\rm (a)] $E$ is exceptional;
\item[\rm (b)] for every unit vector $x_0\in\CC^2$ such that $\|Ex_0\|=\|E\|$, we have $\langle Ex_0,x_0\rangle=0$;
\item[\rm(c)] $E$ is unitarily equivalent to a matrix with zeros on the diagonal;
\item[\rm (d)] $E$ has trace zero.
\end{enumerate}
\end{theorem}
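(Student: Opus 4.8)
The plan is to prove the cycle of implications $(a)\Rightarrow(b)\Rightarrow(c)\Rightarrow(d)\Rightarrow(a)$, exploiting the fact that in the $2\times2$ case Theorem~\ref{T:NxNnec}(a) already gives us almost everything. The implication $(a)\Rightarrow(b)$ is immediate from Theorem~\ref{T:orthog}. For $(b)\Rightarrow(c)$, I would take a unit vector $x_0$ with $\|Ex_0\|=\|E\|$, set $y_2:=Ex_0/\|E\|$, and use the hypothesis $\langle Ex_0,x_0\rangle=0$ together with $E^*Ex_0=\|E\|^2 x_0$ (which holds because $x_0$ is a maximizing vector) to show that $E$ has zero diagonal with respect to the orthonormal basis $\{x_0,y_2\}$; this is exactly the $N=2$ instance of the computation in the proof of Theorem~\ref{T:NxNnec}(a), except that there we already had $(a)$ in hand whereas here we only assume $(b)$, so I must be careful that $(b)$ alone suffices — and it does, since the only place orthogonality entered was precisely relation \eqref{E:orthog1}.

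The step $(c)\Rightarrow(d)$ is trivial, since trace is invariant under unitary (indeed any) similarity and a matrix with zero diagonal has trace zero. The substantive implication is $(d)\Rightarrow(a)$. Here I would argue as follows. Suppose $\tr E=0$, $\sigma(E)\subset\DD$, and $\|E\|>1$. Since $E$ is $2\times2$ with trace zero, its two eigenvalues are $\lambda$ and $-\lambda$ for some $\lambda$, and $|\lambda|<1$. Let $x_0$ be a unit vector maximizing $\|Ex_0\|$ and set $y_2:=Ex_0/\|E\|$; I claim $\{x_0,y_2\}$ is orthonormal. Indeed $\langle Ex_0,x_0\rangle$ is the $(1,1)$ entry of the matrix of $E$ in the basis $\{x_0,y_2,\dots\}$ — but wait, orthogonality of $x_0$ and $y_2$ is what I am trying to establish, so I cannot presuppose the basis. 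Instead I proceed more directly: by the singular value decomposition (or by the same computation as before) there is an orthonormal basis $\{u_1,u_2\}$ with $Eu_1=\|E\|v_2$ and $Eu_2=\mu v_1$ for orthonormal $\{v_1,v_2\}$ and $0\le\mu\le\|E\|$; writing $E$ in the basis $\{v_1,v_2\}$ one sees $E$ is unitarily equivalent to $\left(\begin{smallmatrix} c & \mu \\ a & d\end{smallmatrix}\right)$ with $a=\|E\|$, $\mu=\|A\|$-type entry, and then $\tr E=c+d=0$. The cleaner route is: by Theorem~\ref{T:NxNnec}(a) applied in the opposite direction — no, that requires $(a)$. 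So the honest argument is that for a general $2\times2$ matrix with $\|E\|=a$ attained at $x_0$, writing $y_2=Ex_0/a$ and picking any unit vector $w$ orthogonal to $y_2$, the matrix of $E$ in a suitable pair of orthonormal bases is $\left(\begin{smallmatrix} 0 & * \\ a & *\end{smallmatrix}\right)$; and then I must show the two $*$ entries, call them $b$ and $d$, satisfy the relations forced by $\tr E=0$ and $\sigma(E)\subset\DD$. Concretely $\tr E=d=0$ forces the matrix into the form $\left(\begin{smallmatrix} 0 & b \\ a & 0\end{smallmatrix}\right)$ up to unitary equivalence, whose eigenvalues are $\pm\sqrt{ab}$, so $\sigma(E)\subset\DD$ gives $|ab|<1$, i.e.\ $|b|<1/a$; but $|b|$ plays the role of $\|A\|$, so Theorem~\ref{T:NxNnec}(b) (equivalently Theorem~\ref{T:NxNsuff}) applies and shows $E$ is exceptional.

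The main obstacle, and the point needing the most care, is getting $E$ into the canonical form $\left(\begin{smallmatrix} 0 & b \\ a & 0\end{smallmatrix}\right)$ starting from $(d)$ alone, without circular appeal to $(b)$ or to Theorem~\ref{T:orthog}. The key observation that unlocks this is that for any square matrix $E$ with norm $a$ attained at the unit vector $x_0$, one automatically has $E^*Ex_0=a^2x_0$, hence $\langle Ex_0,Ey\rangle=a^2\langle x_0,y\rangle$ for all $y$; choosing $y\perp x_0$ shows $Ex_0\perp Ey$ whenever $y\perp x_0$, so in the orthonormal basis $\{x_0,x_0^\perp\}$ the matrix of $E$ has the form $\left(\begin{smallmatrix} \alpha & 0 \\ \beta & \gamma\end{smallmatrix}\right)$ with $|\alpha|^2+|\beta|^2=a^2$. (Here I have used that $N=2$, so $x_0^\perp$ is one-dimensional.) Now a further diagonal unitary conjugation makes $\alpha,\beta\ge0$, and $\tr E=0$ gives $\gamma=-\alpha$. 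If $\alpha=0$ we are already done; if $\alpha>0$, one more rotation in the $\{x_0,x_0^\perp\}$ plane — or direct inspection of the eigenvalues $\pm\sqrt{\alpha^2-\beta\cdot 0}$... — here I need the eigenvalues of $\left(\begin{smallmatrix} \alpha & 0 \\ \beta & -\alpha\end{smallmatrix}\right)$, which are $\pm\alpha$, so $\sigma(E)\subset\DD$ forces $\alpha<1\le a$, whence $\beta=\sqrt{a^2-\alpha^2}>0$ and in particular $\beta>\sqrt{a^2-1}$. To finish I diagonalize away the $\alpha$'s: the matrix $\left(\begin{smallmatrix} \alpha & 0 \\ \beta & -\alpha\end{smallmatrix}\right)$ is unitarily equivalent to one of the form $\left(\begin{smallmatrix} 0 & b \\ a & 0\end{smallmatrix}\right)$ — this last equivalence is the one genuinely delicate computation, reducing to solving a small system for the rotation angle — after which $|ab|=|{-\alpha^2}|=\alpha^2<1$ delivers $|b|<1/a$ and Theorem~\ref{T:NxNnec}(b) closes the loop. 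I would present the argument with this canonical-form reduction as its backbone.
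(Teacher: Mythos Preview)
Your implications $(a)\Rightarrow(b)\Rightarrow(c)\Rightarrow(d)$ are fine and match the paper. The problem is in your $(d)\Rightarrow(a)$, specifically the claim that from $E^*Ex_0=a^2x_0$ one gets that the matrix of $E$ in the basis $\{x_0,x_0^\perp\}$ has the form $\left(\begin{smallmatrix}\alpha&0\\\beta&\gamma\end{smallmatrix}\right)$. The relation $E^*Ex_0=a^2x_0$ indeed gives $Ex_0\perp Ex_0^\perp$, but that says the two \emph{columns} of the matrix are orthogonal vectors, not that the $(1,2)$ entry vanishes. A concrete counterexample: $E=\left(\begin{smallmatrix}0&1\\2&0\end{smallmatrix}\right)$ has $x_0=e_1$, $x_0^\perp=e_2$, and the $(1,2)$ entry is~$1$. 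Because of this slip you end up with the wrong canonical form $\left(\begin{smallmatrix}\alpha&0\\\beta&-\alpha\end{smallmatrix}\right)$ and then face an unnecessary ``delicate computation'' at the end.

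Ironically, the argument is \emph{easier} if you keep the correct general form. Writing $E\sim\left(\begin{smallmatrix}\alpha&\delta\\\beta&\gamma\end{smallmatrix}\right)$ in the basis $\{x_0,x_0^\perp\}$, you have $|\alpha|^2+|\beta|^2=a^2$, column orthogonality $\overline{\alpha}\delta+\overline{\beta}\gamma=0$, and $\gamma=-\alpha$ from $\tr E=0$. If $\alpha\neq0$ these combine to give $\delta=\alpha\overline{\beta}/\overline{\alpha}$, and then the eigenvalues, which satisfy $z^2=\alpha^2+\beta\delta=(\alpha/\overline{\alpha})\,a^2$, have modulus exactly $a>1$, contradicting $\sigma(E)\subset\DD$. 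Hence $\alpha=0$, the matrix is already $\left(\begin{smallmatrix}0&\delta\\\beta&0\end{smallmatrix}\right)$ with $|\beta|=a$ and $|\beta\delta|<1$, and after a diagonal unitary conjugation Theorem~\ref{T:NxNsuff} applies directly --- no further rotation needed.

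The paper sidesteps all of this by closing the cycle differently: it proves $(c)\Rightarrow(a)$ (where one already starts from a zero-diagonal matrix, so the determinant condition $|ab|<1$ is immediate) and handles $(d)\Rightarrow(c)$ separately as the standard exercise, via Schur, that any trace-zero matrix is unitarily equivalent to one with zero diagonal. That route never needs to introduce the maximizing vector $x_0$ at all in the step starting from $(d)$.
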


\begin{proof}
(a)$\Rightarrow$(b). 
This is just a special case of Theorem~\ref{T:orthog}.

\medskip
(b)$\Rightarrow$(c). 
Repeat the proof of Theorem~\ref{T:NxNnec}.

\medskip
(c)$\Rightarrow$(a). 
Since unitary equivalence preserves the property of being exceptional,
it is enough to show that every $2\times 2$ matrix $E$ with zero entries on the diagonal is exceptional.
Let $a$ and $b$ be the off-diagonal entries of $E$. Without loss of generality $|a|\ge|b|$.
Conjugating $E$ by a suitable unitary matrix, we may also suppose that $a\ge0$.
Since $\|E\|>1$, we have $a>1$. Also, since $\sigma(E)\subset\DD$, we have $|\det(E)|<1$, whence $|b|<1/a$.
Thus either $E$ or its transpose has the form \eqref{E:NxNsuff}, and so by Theorem~\ref{T:NxNsuff} $E$ is exceptional.

\medskip
(c)$\Longleftrightarrow$(d).
The forward direction is obvious. The reverse direction is an
exercise using the Schur factorization (every square matrix is unitarily equivalent to a triangular matrix).
\end{proof}
 
\begin{remark}
The equivalence between conditions (b) and (c) in Theorem~\ref{T:2x2} was remarked by the authors 
of \cite{CGL18}, who used it to give a simplified proof of the Crouzeix conjecture for $2\times 2$ matrices.
(Their remark does not appear in the published paper \cite{CGL18}, but it can be found in the first version of
the paper on the arXiv, namely {\tt arXiv:1707.08603v1}.)
\end{remark}

\section{Conclusions}

What conclusions can we draw from these results?
First of all, they demonstrate the primordial role played by the orthogonality property
Theorem~\ref{T:orthog}. Indeed,  the equivalence between conditions (a) and (b)
in Theorem~\ref{T:2x2} demonstrates that this property completely
characterizes exceptional $2\times 2$  matrices 
(among matrices $E$ with $\sigma(E)\subset\DD$ and $\|E\|>1$).
For matrices of higher dimension, this is no longer the case.
Nonetheless, the partial characterization  in
Theorem~\ref{T:NxNnec} seems to indicate that, apart from the orthogonality property,
exceptional matrices satisfy no other equality-type constraints.
There remain the inequality-type constraints, such as those in Theorem~\ref{T:positive}.
Perhaps these might eventually lead to a complete characterization of exceptional
matrices in all dimensions.

\section*{Funding}
Ransford's work  was supported by an NSERC Discovery Grant  and by the Canada Research Chairs Program. 
Walsh's work was  supported by an NSERC Undergraduate Student Research Award and an FRQNT Supplement.

\bibliographystyle{tfnlm}
\bibliography{biblist}

\end{document}